\documentclass[12pt]{article}
\usepackage[a4paper,margin=1.0in]{geometry}

\usepackage[colorlinks,citecolor=magenta,linkcolor=black]{hyperref}
\pdfpagewidth=\paperwidth \pdfpageheight=\paperheight
\usepackage{amsfonts,amssymb,amsthm,amsmath,eucal,tabu,url}
\usepackage{pgf}
 \usepackage{array}
 \usepackage{tikz-cd}
 \usepackage{pstricks}
 \usepackage{pstricks-add}
 \usepackage{pgf,tikz}
 \usetikzlibrary{automata}
 \usetikzlibrary{arrows}
 \usepackage{indentfirst}
 \pagestyle{myheadings}
\usepackage{tabularx} 


\theoremstyle{plain}
\newtheorem{thm}{Theorem}[section]
\newtheorem{theorem}[thm]{Theorem}

\theoremstyle{definition}
\newtheorem{definition}[thm]{Definition}
\newtheorem{remark}[thm]{Remark}
\newtheorem{example}[thm]{Example}

\newtheorem{question}[thm]{Question}

\newtheorem{assume}[thm]{Assumption}

\newtheorem{thevarthm}[thm]{\varthmname}

\newenvironment{varthm*}[1]{\trivlist\item[]{\bf #1.}\it}{\endtrivlist}

\newtheorem{custom}{{\rm Theorem}}


\renewcommand\geq{\geqslant}

\renewcommand\leq{\leqslant}

\newcommand\be{\begin{eqnarray*}}
\newcommand\ee{\end{eqnarray*}}

\newcommand\newop[2]{\def#1{\mathop{\rm #2}\nolimits}}
\newop\log{log}
\newop\ord{ord}
\newop\Gal{Gal}
\newop\SL{SL}
\newop\Bl{Bl}
\newop\mult{mult}
\newop\mass{mass}
\newop\div{div}
\newop\codim{codim}
\newop\sing{sing}
\newop\vdim{vdim}
\newop\edim{edim}
\newop\Ass{Ass}
\newop\size{size}
\newop\reg{reg}
\newop\satdeg{satdeg}
\newop\supp{supp}
\newop\Neg{Neg}
\newop\Nef{Nef}
\newop\Nefh{Nef_H}
\newop\Eff{Eff}
\newop\Zar{Zar}
\newop\MB{MB}
\newop\MBxC{MB\mathit{(x,C)}}
\newop\NnB{NnB}
\newop\Bigg{Big}
\newop\Effbar{\overline{\Eff}}

\def\keywordname{{\bfseries Keywords}}%
\def\keywords#1{\par\addvspace\medskipamount{\rightskip=0pt plus1cm
\def\and{\ifhmode\unskip\nobreak\fi\ $\cdot$
}\noindent\keywordname\enspace\ignorespaces#1\par}}
\def\subclassname{{\bfseries Mathematics Subject Classification
(2020)}\enspace}
\def\subclass#1{\par\addvspace\medskipamount{\rightskip=0pt plus1cm
\def\and{\ifhmode\unskip\nobreak\fi\ $\cdot$
}\noindent\subclassname\ignorespaces#1\par}}

\begin{document}
\title{On free and nearly free arrangements of conics admitting certain ${\rm ADE}$ singularities}
\author{Piotr Pokora}
\date{\today}
\maketitle

\thispagestyle{empty}
\begin{abstract}
The main purpose of this paper is to provide combinatorial constraints on the constructability of free and nearly free arrangements of smooth plane conics admitting certain ${\rm ADE}$ singularites. 
\keywords{conic arrangements, quasi-homogeneous singularities, freeness, nearly freeness}
\subclass{14N20, 14C20, 32S22}
\end{abstract}
\section{Introduction}
The theory of lines arrangements in the plane is an almost ancient topic in geometry and combinatorics. Recently, arrangements of rational plane curves have received a lot of attention, mostly in the context of studies devoted to the freeness of curves, see for instance \cite{DimcaConic,IDPS,DimcaPokora,max,SchenckToh,Schen}. Classically, the freeness problem is related with (central) hyperplane arrangements, and this is mostly due to the celebrated conjecture by Terao. However, it turns out very shortly that the freeness problem of plane curve is getting more and more difficult if we increase degrees of irreducible components, it is especially visible if we pass from line arrangements to conic-line arrangements, or just conic arrangements in the plane. The main aim of the present note is to understand basic algebraic and combinatorial properties of free and nearly free plane curves in the complex projective plane that are constructed using smooth plane conics as irreducible components. Let us recall the following basic definition and then we formulate our main results devoted to arrangements of smooth plane conics.

Let us denote by $S := \mathbb{C}[x,y,z]$ the coordinate ring of $\mathbb{P}^{2}_{\mathbb{C}}$ and for a homogeneous polynomial $f \in S$ let us denote by $J_{f}$ the Jacobian ideal associated with $f$, i.e., the ideal generated by the partial derivatives $\partial_{x}\, f, \partial_{y} \, f, \partial_{z} \, f$.

Let $C : f=0$ be a reduced curve in $\mathbb{P}^{2}_{\mathbb{C}}$ of degree $d$ defined by $f \in S$. Denote by $M(f) := S/ J_{f}$ the Milnor algebra. 
\begin{definition}
We say that $C$ is $m$-syzygy when $M(f)$ has the following minimal graded free resolution:
$$0 \rightarrow \bigoplus_{i=1}^{m-2}S(-e_{i}) \rightarrow \bigoplus_{i=1}^{m}S(1-d - d_{i}) \rightarrow S^{3}(1-d)\rightarrow S$$
with $e_{1} \leq e_{2} \leq ... \leq e_{m-2}$ and $1\leq d_{1} \leq ... \leq d_{m}$.
\end{definition}
In the light of the above definition, we define the minimal degree of the Jacobian relations among the partial derivatives of $f$, namely
$${\rm mdr}(f) := d_{1}.$$

Among many examples of $m$-syzygy plane curves, we can distinguish the following important classes, and this can be done via the above homological description for a reduced plane curve $C : f=0$ and by using the information about the total Tjurina number of $C$. Before we do so, let us recall the following crucial definition that we will use throughout this note.
\begin{definition}

Let $p$ be an isolated singularity of a polynomial $f\in \mathbb{C}[x,y]$. Since we can change the local coordinates, let $p=(0,0)$.
The number 
$$\mu_{p}=\dim_\mathbb{C}\left(\mathbb{C}[x,y] /\bigg\langle \frac{\partial f}{\partial x},\frac{\partial f}{\partial y} \bigg\rangle\right)$$
is called the Milnor number of $f$ at $p$.

The number
$$\tau_{p}=\dim_\mathbb{C}\left(\mathbb{C}[ x,y] /\bigg\langle f,\frac{\partial f}{\partial x},\frac{\partial f}{\partial y}\bigg\rangle \right)$$
is called the Tjurina number of $f$ at $p$.
\end{definition}
For a projective situation, with a point $p\in \mathbb{P}^{2}_{\mathbb{C}}$ and a homogeneous polynomial $f\in \mathbb{C}[x,y,z]$, we take local affine coordinates such that $p=(0,0,1)$ and then the dehomogenization of $f$.

Finally, the total Tjurina number of a given reduced curve $C \subset \mathbb{P}^{2}_{\mathbb{C}}$ is defined as
$$\tau(C) = \sum_{p \in {\rm Sing}(C)} \tau_{p}.$$
\begin{definition}
We say that 
\begin{itemize}
\item $C$ is free if and only if $m=2$ and $d_{1}+d_{2}=d-1$. Moreover, \cite{duP} tells us that $C$ with ${\rm mdr}(f)\leq (d-1)/2$ is free if and only if
\begin{equation}
\label{duPles}
(d-1)^{2} - d_{1}(d-d_{1}-1) = \tau(C).
\end{equation}
\item $C$ is nearly-free if and only if $m=3$, $d_{1}+d_{2} = d$, $d_{2}=d_{3}$, and $e_{1}=d+d_{2}$. Moreover, by a result due to Dimca \cite{Dimca1}, we know that $C$ is nearly free if and only if
\begin{equation}
(d-1)^{2}-d_{1}(d-d_{1}-1)=\tau(C)+1.
\end{equation}
\end{itemize}
\end{definition}
In the context of plane conic arrangements, we have two results that stand for the main motivation for this paper. First of all, Dimca, Janasz and the author in \cite{DJP} observed that conic arrangements admitting only nodes and tacnodes as singularities are never free, and they provided a complete classification of such arrangements that are nearly free
\begin{theorem}[Janasz-Dimca-Pokora]
Let $\mathcal{C} \subset \mathbb{P}^{2}_{\mathbb{C}}$ be an arrangement of $k \geq 2$ smooth conics with only nodes and tacnodes as singularities. Then $\mathcal{C}$ is nearly-free if and only if
$$ k \leq 4 \text{ and the number of tacnodes is equal to } k(k-1).$$
\end{theorem}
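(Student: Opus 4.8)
The plan is to combine the combinatorial bookkeeping of the singular points with Dimca's numerical criterion for nearly free curves, and then to pin down the possible configurations by classical projective geometry.

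\textbf{Combinatorics and the value of $t$.} Write $\mathcal C = C_1\cup\dots\cup C_k$ with $f = f_1\cdots f_k$, so $\deg f = d = 2k$; let $n$ and $t$ be the numbers of nodes and tacnodes. Since each $C_i$ is smooth, a node or a tacnode of $\mathcal C$ carries exactly two smooth branches and hence lies on exactly two of the conics (three or more concurrent smooth branches would give an ordinary triple point or worse, which is not of the allowed type). Applying Bézout to each of the $\binom k2$ pairs — a shared node contributes local intersection multiplicity $1$, a shared tacnode contributes $2$ — gives $n+2t = 4\binom k2 = 2k(k-1)$, so, using $\tau(A_1)=1$ and $\tau(A_3)=3$,
$$\tau(\mathcal C) = n + 3t = 2k(k-1) + t,\qquad 0\le t\le k(k-1).$$
If $\mathcal C$ is nearly free, put $r = {\rm mdr}(f)$; Dimca's criterion $(d-1)^2 - r(d-1-r) = \tau(\mathcal C)+1$ becomes, after substituting $d=2k$, $(2k-1)^2 = 4k(k-1)+1$ and the formula for $\tau(\mathcal C)$, the identity $r(2k-1-r) = 2k(k-1)-t$. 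On integers $1\le r\le 2k-2$ the left-hand side is at most $k(k-1)$ (a downward parabola, maximal at $r = k-1$ and $r=k$), so $t\ge k(k-1)$; with $t\le k(k-1)$ this forces $t = k(k-1)$, hence $n = 0$, the equality $r(2k-1-r) = k(k-1)$ gives $r\in\{k-1,k\}$, and every pair of conics is bitangent with two distinct tacnodes, no third conic passing through any of them.

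\textbf{The bound $k\le 4$.} Encode $C_i$ by a symmetric matrix $A_i$ (up to scalar), invertible since $C_i$ is smooth. For a bitangent pair the base scheme $C_i\cap C_j = 2p + 2q$ with $p\ne q$ is contained in the double line through $p$ and $q$, so that double line lies in the pencil $\langle C_i, C_j\rangle$: there is $\lambda_{ij}$ with ${\rm rank}(A_i-\lambda_{ij}A_j) = 1$, the associated line not being tangent to $C_i$ (else the pair would meet in a single $A_7$-point). Normalising $A_1 = I$, for $j\ge 2$ we get $A_j = \alpha_j I + v_jv_j^{T}$ with $v_j^{T}v_j\ne 0$ and $\alpha_j(\alpha_j + v_j^{T}v_j)\ne 0$. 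Writing out ${\rm rank}(A_i - \lambda A_j) = 1$ for $i,j\ge 2$ and unwinding — after disposing of the degenerate subcases ($v_i\parallel v_j$; $v_i\times v_j$ isotropic; ${\rm rank}(v_iv_i^{T} - \lambda v_jv_j^{T})<2$), each of which I would show to be incompatible with the previous paragraph (it produces three conics through a point, or a non-tacnode local type) — one is left with $v_i^{T}v_j = 0$ and $\alpha_i/(v_i^{T}v_i) + \alpha_j/(v_j^{T}v_j) = -1$ for all $2\le i<j\le k$. Thus $v_2,\dots,v_k$ are pairwise orthogonal and anisotropic for the form $x^{T}y$, hence linearly independent in $\mathbb C^3$, so $k - 1\le 3$.

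\textbf{Sufficiency for $k\le 4$ and conclusion.} Conversely, if $k\le 4$ and $t = k(k-1)$, then by the Bézout count $\mathcal C$ is a union of $k$ pairwise bitangent conics with $k(k-1)$ distinct tacnodes, and the normalisation above shows such arrangements are, up to ${\rm PGL}_3(\mathbb C)$, of a very restricted shape; a convenient model for $k = 4$ is
$$C_1\colon x^2+y^2+z^2 = 0,\ \ C_2\colon x^2-y^2-z^2 = 0,\ \ C_3\colon -x^2+y^2-z^2 = 0,\ \ C_4\colon -x^2-y^2+z^2 = 0,$$
with the $k = 3$ and $k = 2$ cases obtained by deleting conics; one checks directly that these have only the $k(k-1)$ tacnodes and no other singularity (no three of them are concurrent). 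It then remains to verify ${\rm mdr}(f) = k - 1$: I would exhibit an explicit degree-$(k-1)$ Jacobian syzygy — equivalently a logarithmic vector field of degree $k-1$, which by the Leibniz rule must be tangent to every $C_i$ simultaneously — and rule out syzygies of lower degree, for $k\ge 3$ first showing $\bigcap_i\mathfrak{so}(A_i) = 0$ (so ${\rm mdr}\ge 2$) and then handling the remaining intermediate degree(s) by a direct computation with the explicit model, which by upper semicontinuity of ${\rm mdr}$ also settles the one-parameter family of $k=3$ configurations once one member is treated (alternatively via the du Plessis–Wall inequalities and $\tau(\mathcal C) = 3k(k-1)$). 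Then Dimca's criterion holds with $d_1 = k-1$, so $\mathcal C$ is nearly free; it is never free, since $(2k-1)^2 - d_1(2k-1-d_1) = \tau(\mathcal C) = 3k(k-1)$ has no solution in integers.

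\textbf{Main obstacle.} The crux is the bound $k\le 4$. The reduction of bitangency to the existence of a rank-one member in the pencil is immediate, but the subsequent linear algebra over $\mathbb C$ has to be carried out with care: one must check that the degenerate subcases genuinely cannot occur under the ``only nodes and tacnodes'' hypothesis, and that the surviving equations really force orthogonality together with the relation $\alpha_i/(v_i^{T}v_i)+\alpha_j/(v_j^{T}v_j) = -1$ rather than some other coincidence, so as to confine $v_2,\dots,v_k$ to an anisotropic orthogonal set in a three-dimensional space. A secondary difficulty is the ${\rm mdr}$ computation in the converse direction, where one must actually produce the low-degree syzygy and exclude smaller degrees across the (small) moduli of such arrangements.
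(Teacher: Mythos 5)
You should first note that the paper does not prove this statement at all: it is quoted as background from \cite{DJP}, so there is no internal proof to compare with, and your argument has to stand on its own.

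The necessity half of your proposal is essentially correct. The count $n+2t=2k(k-1)$, the formula $\tau(\mathcal{C})=2k(k-1)+t$, and the reduction of Dimca's criterion to $r(2k-1-r)=2k(k-1)-t\le k(k-1)$ forcing $t=k(k-1)$, $n=0$, $r\in\{k-1,k\}$ are all fine. Your linear-algebra route to $k\le 4$ also genuinely works: with $A_1=I$ and $A_j=\alpha_j I+v_jv_j^T$, a rank-one member $cI+\mu v_iv_i^T-\lambda v_jv_j^T$ of the pencil of $A_i,A_j$ must (for $c\ne 0$) have kernel equal to ${\rm span}(v_i,v_j)$, and annihilating $v_i,v_j$ forces $v_i^Tv_j=0$ together with $\alpha_i/(v_i^Tv_i)+\alpha_j/(v_j^Tv_j)=-1$, while $c=0$ gives a rank-two form and $v_i\parallel v_j$ creates a point on three conics; so the ``degenerate subcases'' you worry about are harmless and $k-1\le 3$ follows. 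This is a different, more elementary (and in fact stronger) route than the one used in the cited literature, where $k\le 4$ is deduced from a Hirzebruch/Langer-type upper bound on the number of tacnodes of a conic arrangement rather than from non-existence of five pairwise bitangent conics.

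The genuine gap is in sufficiency. You must prove ${\rm mdr}(f)\in\{k-1,k\}$ for \emph{every} arrangement with $t=k(k-1)$, and your plan --- compute ${\rm mdr}$ for one explicit model and propagate by ``upper semicontinuity of ${\rm mdr}$'' --- does not achieve this: in an equisingular family the locus $\{{\rm mdr}\le r\}$ is closed, so ${\rm mdr}$ can only drop at special members, and knowing its value at one member of the $k=3$ one-parameter family (or the $k=2$ family) does not exclude special members with smaller ${\rm mdr}$; moreover the du Plessis--Wall lower bound $\tau\ge (d-1)(d-1-r)$ only yields $r\gtrsim k/2$, not $r\ge k-1$, and the announced explicit syzygy computations (the degree-$(k-1)$ syzygy, the $\bigcap_i\mathfrak{so}(A_i)=0$ step) are never carried out. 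A clean repair is available with tools quoted in this very paper: since a tacnode has log canonical threshold $3/4$ and all singularities are quasi-homogeneous, Theorem \ref{sern} gives ${\rm mdr}(f)\ge \tfrac{3}{4}\cdot 2k-2=\tfrac{3k-4}{2}\ge k-1$ for every such arrangement, while the refined du Plessis--Wall inequality for $r\ge d/2$ applied with $\tau=3k(k-1)$ excludes ${\rm mdr}(f)\ge k+1$; hence ${\rm mdr}(f)\in\{k-1,k\}$ uniformly and Dimca's criterion gives near-freeness. As a bonus, the same bound $\tfrac{3k-4}{2}\le k$ re-proves $k\le 4$ in the necessity direction without any pencil analysis, so you could dispense with the matrix argument altogether (or keep it as the stronger statement that such configurations simply do not exist for $k\ge 5$).
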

An analogous question for conic arrangements, where the list of possible singularities was extended, was recently considered by the author in \cite[Theorem B]{Pokora}. Among others, the author proved the following characterization.
\begin{theorem}[Pokora]
\label{PokC}
Let $\mathcal{C} \subset \mathbb{P}^{2}_{\mathbb{C}}$ be an arrangement of $k \geq 2$ smooth conics with only nodes, tacnodes, ordinary triple and quadruple points as singularities. Then $\mathcal{C}$ is never free.
\end{theorem}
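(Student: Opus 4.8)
The plan is to compare the Tjurina number forced by freeness with the one allowed by B\'ezout's theorem, and to observe that these two ranges are disjoint. Write $C = C_1 \cup \cdots \cup C_k$, let $f$ be a reduced defining polynomial, and set $d = \deg C = 2k$.

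Suppose, towards a contradiction, that $C$ is free, with exponents $d_1 \le d_2$ satisfying $d_1 + d_2 = d - 1 = 2k-1$. A union of $k \ge 2$ smooth conics is certainly not a union of concurrent lines, so ${\rm mdr}(f) = d_1 \ge 1$; since $d_1 \le d_2$ and $2k-1$ is odd this forces $1 \le d_1 \le k-1$, whence $d_1 d_2 \le (k-1)k$. Because $d_1 \le (d-1)/2$, equation \eqref{duPles} applies and gives
\[
\tau(C) = (d-1)^2 - d_1(d - d_1 - 1) = (2k-1)^2 - d_1 d_2 \ge (2k-1)^2 - k(k-1) = 3k^2 - 3k + 1 .
\]

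Next I would bound $\tau(C)$ from above by a combinatorial count. Each $C_i$ is smooth, so every point of ${\rm Sing}(C)$ is a point through which at least two of the conics pass, and by hypothesis it is a node, a tacnode, an ordinary triple point, or an ordinary quadruple point; these are all quasi-homogeneous, with Tjurina numbers $\tau = \mu$ equal to $1$, $3$, $4$, $9$ respectively. Writing $n_2, t, n_3, n_4$ for the numbers of such points, a node uses up one unit of local intersection multiplicity between the two conics meeting there, a tacnode two units, an ordinary triple point $\binom{3}{2} = 3$ units, and an ordinary quadruple point $\binom{4}{2} = 6$ units; summing over all pairs of conics and using $C_i \cdot C_j = 4$ (B\'ezout) yields
\[
n_2 + 2t + 3n_3 + 6n_4 = 4\binom{k}{2} = 2k(k-1) .
\]
On the other hand $\tau(C) = n_2 + 3t + 4n_3 + 9n_4$, so comparing $2\,\tau(C)$ with three times the displayed identity gives $2\,\tau(C) = 6k(k-1) - (n_2 + n_3) \le 6k(k-1)$, i.e. $\tau(C) \le 3k^2 - 3k$. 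This contradicts the lower bound $\tau(C) \ge 3k^2 - 3k + 1$ obtained above, so $C$ cannot be free.

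The part that requires care is the local dictionary invoked in the third step: one must pin down the analytic type of each permitted singularity of a conic arrangement and read off both its Tjurina number and the amount of pairwise intersection multiplicity it consumes --- in particular the identities $\tau = \mu$ for the ordinary triple and quadruple points, together with the fact that ${\rm mdr}(f)=0$ forces $C$ to be a union of lines through a point. Everything after that is the elementary inequality above; note that no refined information from du Plessis--Wall theory is needed, only the exponent identity \eqref{duPles} valid for free curves.
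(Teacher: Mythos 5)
Your argument is correct. Note first that this statement is not proved in the paper at all: it is imported from \cite[Theorem B]{Pokora}, so there is no internal proof to compare against; the closest internal analogues are Theorem \ref{near} and Theorem \ref{char}. Your route follows the same general template as Theorem \ref{near} (freeness/near-freeness equality of du Plessis--Wall type plus the B\'ezout count over pairs of conics), but instead of a discriminant analysis you play a lower bound against an upper bound for $\tau(\mathcal{C})$, which is arguably cleaner. All the delicate points check out: $d_{1}={\rm mdr}(f)\geq 1$ because $d_{1}=0$ would force $\mathcal{C}$ to be a union of concurrent lines; the local dictionary is right, since nodes, tacnodes, ordinary triple and quadruple points are quasi-homogeneous (for the quadruple point this is exactly the multiplicity $<5$ criterion the paper itself quotes from \cite[Exercise 7.31]{RCS}), giving $\tau=\mu=1,3,4,9$; the pairwise intersection count $n_{2}+2t+3n_{3}+6n_{4}=4\binom{k}{2}=2k(k-1)$ is correct because every singular point of the arrangement lies on as many smooth conics as its multiplicity indicates; and then $2\tau(\mathcal{C})=6k(k-1)-(n_{2}+n_{3})\leq 6k(k-1)$, while freeness together with $1\leq d_{1}\leq k-1$ and \eqref{duPles} forces $\tau(\mathcal{C})=(2k-1)^{2}-d_{1}d_{2}\geq 3k^{2}-3k+1$, a contradiction. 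A pleasant by-product of your bookkeeping, worth recording, is the identity $2\tau(\mathcal{C})=6k(k-1)-(n_{2}+n_{3})$, which quantifies how far such arrangements are from the free range and is in the same spirit as the defect computations in the later sections of the paper.
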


Our main aim is to continue this path of studies and we want to understand to what extent one can generalize the aforementioned results. In the paper we focus on the following class of smooth conic arrangements in the plane.
\begin{assume}
\label{quass}
Let $\mathcal{C} = \{C_{1}, ..., C_{k}\} \subset \mathbb{P}^{2}_{\mathbb{C}}$ be an arrangement of $k\geq 2$ smooth conics. From now on we restrict our attention to arrangements of conics that have $n_{2}$ nodes, $n_{3}$ ordinary triple points, $t_{3}$ tacnodes, $t_{5}$ points of type $A_{5}$, and $t_{7}$ points of type $A_{7}$.
\end{assume}
The above assumption means that these are the worst singularities for conic arrangements that we are allowed to use here. 

Due to the notation reasons, and for the completeness of the note, let us recall the classification of ${\rm ADE}$ singularities for plane curves:
\begin{center}
\begin{tabular}{ll}
$A_{k}$ with $k\geq 1$ & $: \, x^{2}+y^{k+1}  = 0$, \\
$D_{k}$ with $k\geq 4$ & $: \, y^{2}x + x^{k-1}  = 0$,\\
$E_{6}$ & $: \, x^{3} + y^{4} = 0$, \\
$E_{7}$ & $: \, x^{3} + xy^{3} = 0$, \\
$E_{8}$ & $:\, x^{3}+y^{5} = 0$.
\end{tabular}
\end{center}
Our selection of singularities is well-motivated by works that have been published recently, see \cite{DimcaSt} as our main inspiration. Of course, all prescribed singularities in Assumption \ref{quass} are \textit{quasi-homogeneous} since these are ${\rm ADE}$ singularities.
\begin{definition}
A singularity is called quasi-homogeneous if and only if there exists a holomorphic change of variables so that the defining equation becomes weighted homogeneous.
\end{definition}
 Recall that $f(x,y) = \sum_{i,j}c_{i,j}x^{i}y^{j}$ is weighted homogeneous if there exist rational numbers $\alpha, \beta$ such that $\sum_{i,j} c_{i,j}x^{i\cdot \alpha} y^{j \cdot \beta}$ is homogeneous. One can show that if $f(x,y)$ is a convergent power series with an isolated singularity at the origin, then $f(x,y)$ is in the ideal generated by the partial derivatives if and only if $f$ is quasi-homogeneous. It means that in the quasi-homogeneous case one has $\tau_{p} = \mu_{p}$, i.e., the local Tjurina number of $p$ is equal to the local Milnor number of $p$. This technical observation will be used freely in the paper. 

 Let us now present the main results of the paper. Our first result is devoted to conic arrangements which admit only nodes and ordinary triple points as singularities.
 \begin{custom}[see Theorem \ref{near}]
Let $\mathcal{C} = \{C_{1}, ..., C_{k}\} \subset \mathbb{P}^{2}_{\mathbb{C}}$ be an arrangement of $k\geq 2$ smooth conics that admits $n_{2}$ nodes and $n_{3}$ ordinary triple points. Then $\mathcal{C}$ is never nearly free.
 \end{custom}
Notice that this is in some ways at odds with what we know about line arrangements with nodes and triple points since there are examples of line arrangements with double and triple points that are nearly free \cite{Kabat}. 

Then we study free arrangements of conics with prescribed singularities according to Assumption \ref{quass}. We can show the following strong bound on the degree of such conic arrangements.
\begin{custom}[see Theorem \ref{char}]
Let $\mathcal{C} \subset \mathbb{P}^{2}_{\mathbb{C}}$ be an arrangement of $k\geq 2$ conics satisfying Assumption \ref{quass}. Assume that $\mathcal{C}$ is free, then $ k \in \{2,3,4\}$.
\end{custom}
Then we focus on constructions of new examples of conic arrangements with prescribed singularities that are nearly free. 
\begin{custom}[see Theorem \ref{deform}]
Let $C \, : f=0$ be a reduced plane curve in $\mathbb{P}^{2}_{\mathbb{C}}$ that admits only ${\rm ADE}$ singularities such that it has at least one tacnode. Assume that there exists a deformation $C' \, : f^{'}=0$ which is obtained from $C$ by the following procedure:
\begin{itemize}
    \item a tacnode is deformed into two nodes;
    \item all other singular points maintain their types.
\end{itemize}
Assume furthermore that $\eta(C) = \eta(C')$, then $C'$ is nearly free.
\end{custom}
As an example, we present a direct application of the above result using Persson's triconical arrangement as our starting point.

In the last part of the paper we study examples of pencils of conics in the plane, but here we allow them to possess singularities that are arbitrary (i.e., not quasi-homogeneous and/or of arbitrary multiplicity). We present three particular families of conic pencils in the plane, one is free, the second one is neither free nor nearly free, the third one is nearly free, but all of them possess an interesting combinatorial property that we call the combinatorial supersolvability. Our main observation is that the property of being combinatorially supersolvable in the class of conic arrangements is not strong enough to produce a new closed class of free conic arrangements.
\section{On the freeness and nearly freeness of conic arrangements in the plane}
\begin{theorem}
\label{near}
   Let $\mathcal{C} = \{C_{1}, ..., C_{k}\} \subset \mathbb{P}^{2}_{\mathbb{C}}$ be an arrangement of $k\geq 2$ smooth conics that admits $n_{2}$ nodes and $n_{3}$ ordinary triple points. Then $\mathcal{C}$ is never nearly free. 
\end{theorem}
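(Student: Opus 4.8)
The plan is to set up the standard numerical machinery for $m$-syzygy curves and derive a contradiction from the nearly-free equation. Write $d = 2k$ for the degree of $\mathcal{C}$, and record that a node contributes $\tau_p = 1$, an ordinary triple point contributes $\tau_p = 4$, so that $\tau(\mathcal{C}) = n_2 + 4n_3$. First I would note that the number of intersection points counted with multiplicity is governed by Bézout: each pair of distinct conics meets in $4$ points (counted properly), so $4\binom{k}{2} = \sum_p e_p$ where $e_p$ is the local intersection contribution; a node absorbs $1$ and an ordinary triple point (three smooth branches through $p$) absorbs $3$. Hence we get the linear relation $n_2 + 3n_3 = 4\binom{k}{2} = 2k(k-1)$. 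This is the combinatorial constraint that pins down $n_2$ in terms of $n_3$ and $k$.

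Next I would invoke the nearly-free characterization: if $\mathcal{C}$ is nearly free with exponents $(d_1, d_2)$, $d_1 + d_2 = d = 2k$ and $d_1 \le d_2$, then by Dimca's result
\[
(d-1)^2 - d_1(d - d_1 - 1) = \tau(\mathcal{C}) + 1.
\]
Substituting $d = 2k$ and $\tau(\mathcal{C}) = n_2 + 4n_3 = 2k(k-1) + n_3$ (using the Bézout relation to eliminate $n_2$), the left side becomes $(2k-1)^2 - d_1(2k - 1 - d_1)$. I would then combine this with the bound ${\rm mdr}(f) = d_1 \le (d-1)/2 = (2k-1)/2$, which holds for nearly-free curves, and with the general lower bound on $\tau$ for curves that are not free — or more directly, treat the displayed equation as a quadratic in $d_1$ and analyze which integer values are admissible given $d_1 \le k - 1$ (since $d_1 \le d_2$ and $d_1 + d_2 = 2k$ force $d_1 \le k$, and $d_1 = k$ would give $d_2 = k$, a borderline case to handle separately).

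The main obstacle, I expect, is showing that no admissible $(d_1, n_3)$ pair actually satisfies the equation. The strategy is: the equation forces $n_3 = (2k-1)^2 - 2k(k-1) - 1 - d_1(2k-1-d_1) = 2k^2 - 2k - d_1(2k - 1 - d_1)$, and one needs $n_3 \ge 0$ together with the geometric realizability constraint $n_3 \le \binom{k}{3}\cdot(\text{something})$ or, more usefully, the constraint coming from $n_2 = 2k(k-1) - 3n_3 \ge 0$. Maximizing $d_1(2k-1-d_1)$ over $d_1 \le k - 1$ gives $d_1 = k-1$, yielding the smallest $n_3$; plugging in should show $n_3$ is forced to be too large (i.e. $n_2 < 0$) for the range $d_1 \le k-1$, while the case $d_1 = k$ needs the extra structural condition $d_2 = d_3$ and $e_1 = d + d_2 = 3k$ from the definition, which I would rule out by checking it against the resolution's Betti-number constraints. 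An alternative cleaner route, which I would try first, is to use the known inequality $\tau(\mathcal{C}) \le (d-1)^2 - d_1(d - d_1 - 1)$ with equality characterizing freeness and $+1$ characterizing near-freeness, combined with a lower bound for $\tau$ of a conic arrangement with only nodes and triple points (which tends to be large relative to $d^2$), to squeeze out the contradiction directly without a full case analysis; the delicate point there is getting the lower bound on $\tau$ tight enough.
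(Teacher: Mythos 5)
Your proposal is essentially the paper's own argument: Dimca's nearly-free equation combined with the B\'ezout count $n_2+3n_3=2k(k-1)$ yields a numerical contradiction, and the step you leave as ``plugging in'' does close it, since eliminating $n_3$ gives $n_2 = 3d_1(2k-1-d_1)-4k(k-1) \le 3k(k-1)-4k(k-1)=-k(k-1)<0$ for every admissible $d_1$ including $d_1=k$, so no Betti-number or structural argument is needed for that case (and your side claim that nearly free curves satisfy $d_1\le (d-1)/2$ is false in general, though harmless since you keep $d_1\le k$). The paper packages the same contradiction slightly more cleanly by viewing the relation as a quadratic in $d_1$ and demanding a nonnegative discriminant, which forces $n_3\ge k^2-k-\tfrac{1}{4}$ and hence $n_2+n_3\le\tfrac{1}{2}$, with no case analysis on the range of $d_1$.
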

\begin{proof}
Our proof is very direct and we proceed by contradiction.  Assume that $\mathcal{C} \, : f=0$ is nearly free. Then it satisfies the following equation
$$d_{1}^{2} - d_{1}(2k-1)+(2k-1)^{2} = \tau(\mathcal{C}) + 1 = n_{2} + 4n_{3}+1,$$
where the formula for the total Tjurina number comes from the fact that all singularities are quasi-homogeneous and $d_{1} = {\rm mdr}(f)$.
By the combinatorial count for conic arrangements with nodes and ordinary triple points we know that 
$$2(k^{2}-k) = n_{2} + 3n_{3}.$$
Then we obtain the following
$$d_{1}^{2} - d_{1}(2k-1) + 4k^{2}-4k+1 = n_{3} + 2k^{2}-2k+1$$
and we finally obtain
$$d_{1}^{2}-d_{1}(2k-1) + 2k^{2}-2k-n_{3}=0.$$
Since $\mathcal{C}$ is nearly free, then
$$\triangle_{d_{1}} = (2k-1)^{2}-4(2k^{2}-2k-n_{3}) \geq 0.$$
This gives us that
$$n_{3}\geq k^{2}-k-\frac{1}{4}.$$
Coming back to the combinatorial count, we have
$$2k^{2}-2k = n_{2} + 3n_{3} \geq n_{2} + n_{3} + 2\bigg(k^{2}-k-\frac{1}{4}\bigg),$$
so we finally obtain $n_{2}+n_{3}\leq \frac{1}{2}$. Since $n_{2},n_{3}\geq 0$ and $n_{2}+n_{3}>1$, we arrive at the contradiction.
\end{proof}
Based on results presented up to right now in the paper, we see a bright contrast with the world of line arrangements (and conic-line arrangements) compared with what is happening in the class of conic arrangements in the plane. Our first observation shows that it is very difficult to construct examples of conic arrangements that are free or nearly free. In the known cases of such arrangements we exploit singularities of type $A_{3}$, $A_{5}$ and $A_{7}$ which is very interesting and yet not well understood.
In that context, we show the following result strong bound on the number of irreducible components for conic arrangements satisfying Assumption \ref{quass} -- it turns out that such arrangements are extremely rare.
\begin{theorem}
\label{char}
Let $\mathcal{C} \subset \mathbb{P}^{2}_{\mathbb{C}}$ be an arrangement of $k\geq 2$ conics satisfying Assumption \ref{quass}. Assume that $\mathcal{C}$ is free, then $ k \in \{2,3,4\}$.
\end{theorem}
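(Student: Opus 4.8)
The plan is to run the du Plessis–Wall criterion for freeness against the Bézout count of the arrangement, and then to compare the resulting numerical identity with a parameter count for families of smooth conics carrying the prescribed ${\rm ADE}$ singularities.

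First, the numerical part. Put $d=2k$ and $r={\rm mdr}(f)$. Freeness gives $d_{1}+d_{2}=d-1$ with $d_{1}\leq d_{2}$, hence $1\leq r\leq k-1$ (the value $r=0$ would make $f$, after a linear change of coordinates, independent of one variable, i.e. $\mathcal C$ a union of concurrent lines, contrary to hypothesis). All singularities in Assumption \ref{quass} are quasi-homogeneous, so $\tau_{p}=\mu_{p}$ for each $p\in{\rm Sing}(\mathcal C)$ and $\tau(\mathcal C)=n_{2}+4n_{3}+3t_{3}+5t_{5}+7t_{7}$. Substituting into \eqref{duPles} with $d=2k$ gives the freeness identity
\[
n_{2}+4n_{3}+3t_{3}+5t_{5}+7t_{7}=(2k-1)^{2}-r(2k-1-r).
\]
On the other hand two distinct conics meet in $4$ points counted with multiplicity, and a node, tacnode, $A_{5}$, $A_{7}$ contributes respectively $1,2,3,4$ to the intersection number of the pair through it, while an ordinary triple point contributes $3$ (three conics, pairwise transverse); summing over the $\binom{k}{2}$ pairs yields
\[
n_{2}+3n_{3}+2t_{3}+3t_{5}+4t_{7}=4\tbinom{k}{2}=2k^{2}-2k.
\]
Subtracting the two identities and using $r(2k-1-r)\leq(k-1)k$ for $1\leq r\leq k-1$,
\[
n_{3}+t_{3}+2t_{5}+3t_{7}=2k^{2}-2k+1-r(2k-1-r)\geq k^{2}-k+1 .
\]

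The second ingredient is an upper bound, linear in $k$, for the left-hand side of the last inequality. Each ordinary triple point imposes one incidence condition (``a third conic passes through an intersection point of two others''), each tacnode one tangency condition (``two of the four intersection points of a pair collide''), each $A_{5}$ two and each $A_{7}$ three such conditions on the ordered tuple $(C_{1},\dots,C_{k})$ inside the $5k$-dimensional variety of $k$-tuples of smooth conics. A careful accounting of these conditions — keeping track of which conics each one constrains, so that the bound comes out linear in $k$ with the right constant — gives $n_{3}+t_{3}+2t_{5}+3t_{7}\leq c\,k$ with $c$ small enough that, together with the inequality above, $k^{2}-k+1\leq c\,k$ forces $k\leq 4$.

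The crux, and the step I expect to be hardest, is exactly this sharp linear bound: a crude expected-dimension count gives only $k\leq 5$, so to cut $k$ down to $\{2,3,4\}$ one has to argue that the conditions attached to distinct singular points are genuinely independent, and then dispose of the boundary case $k=5$ by hand — for $k=5$ the two identities leave only a short explicit list of admissible vectors $(n_{2},n_{3},t_{3},t_{5},t_{7})$ (one list for each $r\in\{1,2,3,4\}$), each of which must then be eliminated, e.g. by the refined parameter count or by restricting the degree-$r$ Jacobian syzygy of $f$ to a well-chosen conic component. Granting $k\leq 4$, the statement follows; producing actual free arrangements for $k\in\{2,3,4\}$ is a separate issue, handled elsewhere in the paper.
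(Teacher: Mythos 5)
Your first half is fine: computing $\tau(\mathcal C)=n_{2}+4n_{3}+3t_{3}+5t_{5}+7t_{7}$, the B\'ezout count $n_{2}+3n_{3}+2t_{3}+3t_{5}+4t_{7}=2k^{2}-2k$, and subtracting against the du Plessis--Wall identity with $1\le r\le k-1$ to get $n_{3}+t_{3}+2t_{5}+3t_{7}\ge k^{2}-k+1$ is all correct. But the proof then rests entirely on an upper bound $n_{3}+t_{3}+2t_{5}+3t_{7}\le c\,k$ with $c$ roughly $4$, and this is exactly the step you do not prove. The justification you sketch --- counting expected codimensions of the tangency/incidence conditions in the $5k$-dimensional space of $k$-tuples of conics and asserting their independence --- is not a valid argument: an actual arrangement need not be a generic point of the relevant incidence variety, and special configurations routinely satisfy far more conditions than a naive dimension count permits (the paper's own examples illustrate how delicate this is: the nearly free arrangements of the Dimca--Janasz--Pokora theorem carry $k(k-1)$ tacnodes, and P\l oski-type configurations pile up pairwise order-$4$ tangencies well beyond any expected-dimension bound). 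So the claimed linear bound is neither established nor obviously true, and even by your own accounting the crude count only yields $k\le 5$, with the $k=5$ elimination deferred to an unexecuted case analysis. As it stands, the argument does not prove the theorem.

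The paper's route avoids this entirely and is much shorter: since every allowed singularity is quasi-homogeneous and the worst one is $A_{7}$, the Arnold exponent (log canonical threshold) satisfies $\alpha_{\mathcal C}\ge \tfrac58$, so the Dimca--Sernesi bound gives ${\rm mdr}(f)\ge \tfrac58\cdot 2k-2$; freeness forces $d_{1}\le \tfrac{2k-1}{2}$, whence $4k-2\ge 5k-8$ and $k\le 6$. The cases $k=6$ and $k=5$ are then excluded by integrality: for $k=6$ one would need $d_{1}\ge 6$ while freeness caps $d_{1}$ at $5$, and for $k=5$ one would need $d_{1}\ge 5$ while the cap is $4$. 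If you want to salvage your approach, the missing ingredient is precisely a proved inequality bounding the weighted count of non-nodal singularities --- in the literature this comes from Hirzebruch-type inequalities or from spectrum/lct bounds on ${\rm mdr}$, i.e.\ essentially the tool the paper uses, not from a parameter count.
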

Before we pass to the proof of the above theorem, let us recall the following result \cite[Theorem 2.1]{DimcaSernesi}.
\begin{theorem}[Dimca-Sernesi]
\label{sern}
Let $C \, : \, f = 0$ be a reduced curve of degree $d$ in $\mathbb{P}^{2}_{\mathbb{C}}$ having only quasi-homogeneous singularities. Then $${\rm mdr}(f) \geq \alpha_{C}\cdot d - 2,$$
where $\alpha_{C}$ denotes the Arnold exponent of $C$.
\end{theorem}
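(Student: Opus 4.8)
The plan is to prove Theorem~\ref{sern} Hodge-theoretically on the complement $U:=\mathbb{P}^{2}_{\mathbb{C}}\setminus C$, following the circle of ideas that relates Jacobian syzygies to the pole order filtration on $H^{2}(U)$. Write $d=\deg C$ and $\Omega:=\iota_{E}(dx\wedge dy\wedge dz)$ for the contraction of the volume form by the Euler vector field. I would first recall Griffiths' description of the relevant part of $H^{2}(U,\mathbb{C})$: it is spanned by the classes of rational $2$-forms $P\,\Omega/f^{s}$ with $P$ homogeneous of degree $sd-3$, it carries the Hodge filtration $F^{\bullet}$, and it carries the pole order filtration $P^{\bullet}$ with $F^{q}\subseteq P^{q}$, where $P^{q}$ is spanned by the classes representable with a pole of order at most $3-q$ along $C$. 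The crucial translation step is to interpret the hypothesis ${\rm mdr}(f)=m$ in this language: by the Euler relation $xf_{x}+yf_{y}+zf_{z}=df$, a nonzero Jacobian relation $\rho=(a,b,c)$ of degree $m$ with $af_{x}+bf_{y}+cf_{z}=0$ becomes, after the Euler substitution, precisely a relation among the Griffiths forms that is not forced by exactness; equivalently it certifies that the finite-length module $N(f):=I_{f}/J_{f}$ is nonzero, where $I_{f}$ is the saturation of $J_{f}$ (the homogeneous ideal of the singular subscheme of $C$), and it fixes the least degree in which $N(f)$ is supported as an explicit linear function of $m$ and $d$. In cohomological terms, a relation of degree $m$ amounts to the strict inclusion $F^{q}\subsetneq P^{q}$ holding at a value $q=q_{0}$ that varies monotonically with $m$, so the theorem reduces to bounding, in terms of the singularities of $C$, how high a pole order can occur at which the Hodge and pole order filtrations first part company.

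The numerical input is the comparison theorem of Deligne and Dimca between the Hodge and pole order filtrations on the cohomology of the complement of a singular hypersurface: for a reduced plane curve with isolated singularities one has $P^{q}H^{2}(U)=F^{q}H^{2}(U)$ once $q$ is large relative to the singular points, and the defect $P^{q}/F^{q}$ is supported at ${\rm Sing}(C)$, with each local summand governed by the spectrum of the corresponding singularity. This is exactly where the quasi-homogeneity hypothesis enters, and it enters twice. First, it gives $\tau_{p}=\mu_{p}$ at every singular point, which is what lets one move freely between $\tau(C)$ and the local Milnor data inside the Griffiths machinery. Second, for a quasi-homogeneous plane curve singularity with weighted homogeneous local equation of weights $(w_{1},w_{2})$ the spectrum is explicit and its smallest number equals the Arnold exponent $\alpha_{p}=w_{1}+w_{2}$; hence the invariant controlling the Deligne--Dimca bound is precisely $\alpha_{C}=\min_{p\in{\rm Sing}(C)}\alpha_{p}$. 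Feeding this in, a class of $P^{q}$ that is not in $F^{q}$ can occur only for $q$ bounded below by an expression in $\alpha_{C}$ and $d$; combining this with the dictionary of the first step (which ties together $q$, the pole order $s$, the numerator degree $sd-3$, and the relation degree $m$) and keeping track of the canonical twist $K_{\mathbb{P}^{2}}=\mathcal{O}(-3)$, one lands exactly on ${\rm mdr}(f)=m\geq\alpha_{C}\,d-2$.

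I expect the main obstacle to be the bookkeeping in these translations rather than any conceptual step. On the syzygy side, the dictionary ``low-degree Jacobian relation $\longleftrightarrow$ non-Hodge class of prescribed pole order'' must be made both rigorous and sharp; the delicate points are the precise placement of the defect module $N(f)$ (the gap between the Milnor algebra $M(f)$ and its saturation) and the fact that the Koszul relations among $f_{x},f_{y},f_{z}$ only appear in degree $d-1$, so that they are harmless exactly in the range $m\leq d-2$ that carries the content. On the arithmetic side, the additive constants have to be tracked carefully enough to reach $-2$ rather than a nearby value; in fact the sharper bound ${\rm mdr}(f)\geq\lceil\alpha_{C}d\rceil-2$ is what the argument produces most naturally, and the stated inequality follows from it. A convenient alternative packaging of the same proof, sidestepping part of the pole-order computation, passes to a log resolution $\pi\colon Y\to\mathbb{P}^{2}$ of the pair $(\mathbb{P}^{2},C)$: a relation of degree $m$ is the same as a homogeneous vector field $\theta=a\partial_{x}+b\partial_{y}+c\partial_{z}$ of degree $m$ with $\theta(f)=0$, and the existence of such $\theta$ with $m$ small would force the sheaf of logarithmic vector fields along $C$ to be too positive, contradicting the semi-positivity bounds on logarithmic Hodge bundles; the relevant discrepancy correction at each quasi-homogeneous singular point is governed by $1-\alpha_{p}$, and this route again reproduces $\alpha_{C}\,d-2$.
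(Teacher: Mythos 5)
The paper does not prove this statement at all: it is imported verbatim from Dimca--Sernesi \cite{DimcaSernesi} (their Theorem 2.1), so there is no internal proof to compare yours against. Your outline does correctly identify the machinery of the original argument --- the Griffiths-type description of $H^{2}(U)$ for $U=\mathbb{P}^{2}_{\mathbb{C}}\setminus C$, the comparison of the Hodge and pole order filtrations, and the fact that for a weighted homogeneous plane curve singularity the minimal spectral number equals the log canonical threshold $w_{1}+w_{2}$, which is why $\alpha_{C}$ is the governing invariant. In that sense the strategy is the right one.

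As a proof, however, the proposal has a genuine gap and one concrete misstatement. The misstatement: a nonzero Jacobian relation of degree $m$ does \emph{not} certify that $N(f)=I_{f}/J_{f}$ is nonzero --- free curves satisfy $N(f)=0$ while having ${\rm mdr}(f)=d_{1}\geq 1$ and plenty of essential syzygies. The correct dictionary is a dimension formula comparing $\dim AR(f)_{k}$ with $\dim N(f)$ in a complementary degree (via the self-duality of $N(f)$ about $T=3(d-2)$), and only a syzygy of degree below the ``expected'' threshold forces $N(f)\neq 0$; this case distinction is not optional, since for curves with only nodes ($\alpha_{C}=1$) the claimed bound ${\rm mdr}(f)\geq d-2$ must be reconciled with the fact that Koszul relations appear in degree $d-1$. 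The gap: the entire quantitative content of the theorem --- in which degree the strict inclusion $F^{q}\subsetneq P^{q}$ is produced as a function of $m$ and $d$, and the precise spectral lower bound on the degrees where $P^{q}/F^{q}$ can be nonzero --- is exactly what you defer to ``bookkeeping.'' Those two computations are where the constants $\alpha_{C}d$ and $-2$ come from; without them the argument only names the theorems from which the inequality would follow, rather than deriving it.
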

It is worth recalling that the Arnold exponent of a given reduced curve $C \subset \mathbb{P}^{2}_{\mathbb{C}}$ is defined as the minimum over all Arnold exponents of singular points $p$ in $C$. In modern language, the Arnold exponents of singular points are nothing else but the log canonical thresholds of singularities. Let us explain how to compute the log canonical threshold in our setting. Since all the singularities we consider here are quasi-homogeneous, we can use the following pattern (cf. \cite[Formula 2.1]{DimcaSernesi}). 

Recall that the germ $(C,p)$ is weighted homogeneous of type $(w_{1},w_{2};1)$ with $0 < w_{j} \leq 1/2$ if there are local analytic coordinates $y_{1},y_{2}$ centered at $p=(0,0)$ and a polynomial $g(y_{1},y_{2})= \sum_{u,v} c_{u, v} y_{1}^{u} y_{2}^{v}$ with $c_{u,v} \in \mathbb{C}$, where the sum is over all pairs $(u,v) \in \mathbb{N}^{2}$ with $u w_{1} + v w_{2}=1$. Using this description, the log canonical threshold can be defined as $${\rm lct}_{p}(g) := w_{1}+w_{2}.$$
\begin{example}
Consider $A_{7}$ singularity at $p=(0,0)$ with the normal form $g(x,y) = y^{2} + x^{8}$. Then $w_{1}=\frac{1}{2}, w_{2} = \frac{1}{8}$, and hence we have ${\rm lct}_{p}(g) = \frac{5}{8}$.
\end{example}

Now we can present our proof of Theorem \ref{char}.
\begin{proof}
Let $\mathcal{C}$ be an arrangement of $k$ conics satisfying Assumption \ref{quass}. Then the Arnold exponent of $\mathcal{C}$ is equal to $\alpha_{\mathcal{C}} = \frac{5}{8}$ since the minimum is obtained for a singular point of type $A_{7}$.
Using Theorem \ref{sern}, we have
$$\frac{2k-1}{2} \geq d_{1} \geq \frac{5}{8}\cdot 2k-2.$$
It implies that
$$4k-2 \geq 5k - 8,$$
and we arrive at
$$k \leq 6.$$
Now we exclude cases where $k=6$ and $k=5$.

Assume that $k=6$. It means, by Theorem \ref{sern}, that we have the following lower bound on $d_{1}$:
$$d_{1} \geq \frac{5}{8}\cdot 12 - 2 = 5.5,$$
and this means that $d_{1} \geq 6$. However, $d_{1}$ has to be less than or equal to $5$, a contradiction.

Assume now that $k=5$. It means again, by Theorem \ref{sern}, that we have the following lower bound on $d_{1}$:
$$d_{1} \geq \frac{5}{8}\cdot 10 - 2 = 4.25,$$
and this means that $d_{1} \geq 5$. However, $d_{1}$ has to be less than or equal to $4$, a contradiction. 
\end{proof}
To complete our degree-wise classification result, we need to construct arrangements with $k \in \{2,3,4\}$ conics and prescribed above singularities that are free. 
\begin{remark}
Let us start with the case $k=2$. We should have an arrangement of two conics such that the total Tjurina number is equal to $7$. Based on a precise discussion in \cite[Proposition 5.5]{DimcaSt}, we know that there exists a $1$-parameter family of two conics that intersect at the single point being $A_{7}$ singularity. Furthermore, we can check using \verb}SINGULAR} that $d_{1}=1$, and using \eqref{duPles} we conclude that our arrangement is free. 

Now we move on to the case $k=3$, and our example here comes from \cite[Remark 4.2.3]{Celal}. Consider the following arrangement of conics $\mathcal{C} = \{C_{1}, C_{2}, C_{3}\} \subset \mathbb{P}^{2}_{\mathbb{C}}$ with
\begin{equation*}
\begin{array}{l}
C_{1} : -3x^2 + xy + yz + zx = 0, \\
C_{2} : -3y^2 + xy + yz + zx = 0, \\
C_{3} : -3z^2 + xy + yz + zx = 0.
\end{array}
\end{equation*}
This arrangement has exactly $3$ singularities of type $A_{5}$ and one ordinary triple point, so the total Tjurina number is equal to $19$. Using \verb}SINGULAR}, one can check that $d_{1}=2$, and this gives us
$$d_{1}^2 - d_{1}(2k-1) + (2k-1)^2 = 19 = \tau(\mathcal{C}) = 4n_{3} + 5t_{5} =19,$$
hence $\mathcal{C}$ is free.

It is natural to ask what to say about the case $k=4$. Using the same argument as in our proof of Theorem \ref{char}, we obtain that $d_{1}=3$, so we arrive at the situation that $\mathcal{C}$ has to be a maximizing octic curve, see \cite[Theorem 2.9]{max} for all necessary details. However, this is a very subtle and complicated problem, since there are exactly $50$ admissible weak combinatorial types of conic arrangements that can potentially lead to a free arrangement. We are aware of a free conic arrangement with $k=4$ conics with only ${\rm ADE}$ singularities, but in that case the singular locus consists of two points of type $D_{10}$, two points of type $D_{6}$, one tacnode and two nodes, see \cite[Example 3.3]{max}. Moreover, we have the following arrangement of conics, inspired by Persson's triconical arrangement \cite[2.1 Proposition]{Persson}, that we would like to describe below.
\end{remark}
\begin{example}

\label{4pers}
This construction delivers an arrangement of $4$ conics with exactly $4$ singularities of type $A_{7}$.
Let us consider the arrangement $\mathcal{P}_{4}$ given by the following homogeneous equation:
$$Q(x,y,z) = (x^{2}+y^{2}-z^{2})\cdot(2x^{2}+y^{2}+2xz)\cdot(x^{2}+y^{2}+2xz)\cdot(4x^{2}+6y^{2}+4xz - 9z^2).$$
The four $A_{7}$ singularites are located at
$$P_{1}=(-1:0:1), \quad P_{2}=(0:0:1), \quad P_{3}=(-2:0:1), \quad P_{4}=(1:0:1),$$
so all these $A_{7}$ singularities are collinear. We have also $8$ nodes as additional singularities, so altogether we have an arrangement of $4$ smooth conics with $4$ singularities of type $A_{7}$ and $8$ nodes. We can check rather easily that the arrangement is nearly free. In order to do so, we only need to check, via \verb}SINGULAR}, that $d_{1} = 3$, and then 
$$37=d_{1}^{2}-d_{1}(2k-1)+(2k-1)^2 = 9-21 + 49=\tau(\mathcal{P}_{4})+1=4\cdot 7 + 8\cdot 1 + 1.$$
\end{example}
As we can see in the example above, we are very close to a free example of plane conics, but unfortunately we cannot do any better, so the case $k=4$ remains to be decided.
\begin{remark}
Using almost the same argument as in Theorem \ref{char}, we can show that if $\mathcal{C}$ is a nearly free arrangement of $k\geq 2$ conics satisfying Assumption \ref{quass}, then
$$k \leq 8.$$
We know that there are examples of nearly free conic arrangements with only tacnodes as singularities, and in this setting $k \in \{2,3,4\}$. Moreover, by Example \ref{4pers}, we have an additional arrangement of $4$ conics that is nearly free. On the other hand, we are not aware of a single example of a nearly free conic arrangement with $k \in \{5,6,7,8\}$ and singularities prescribed in Assumption \ref{quass}.
\end{remark}
\section{Constructing nearly free arrangements from free ones}
Let us introduce the following definition.
\begin{definition}
Let $C \, : f=0$ be a reduced plane curve of degree $d$ in $\mathbb{P}^{2}_{\mathbb{C}}$ and let ${\rm mdr}(f) = d_{1}$. Then we define
$$\eta(C) = d_{1}^{2} -d_{1}(d-1) + (d-1)^{2}.$$
\end{definition}
Our main result is the following.
\begin{theorem}
\label{deform}
Let $C \, : f=0$ be a reduced plane curve in $\mathbb{P}^{2}_{\mathbb{C}}$ that admits only ${\rm ADE}$ singularities such that it has at least one tacnode. Assume that there exists a deformation $C' \, : f^{'}=0$ which is obtained from $C$ by the following procedure:
\begin{itemize}
    \item a tacnode is deformed into two nodes;
    \item all other singular points maintain their types.
\end{itemize}
Assume furthermore that $\eta(C) = \eta(C')$, then $C'$ is nearly free.
\end{theorem}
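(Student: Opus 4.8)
The plan is to reduce the statement, via the numerical criteria for freeness and nearly-freeness recalled in the Introduction, to the assertion that $C$ itself is free, and then to derive that freeness from the deformation hypothesis together with the equality $\eta(C)=\eta(C')$. First note that $C$ and $C'$ share a common degree $d$, that both have only ${\rm ADE}$ (hence quasi-homogeneous) singularities, and that the deformation only replaces one $A_{3}$-point (the tacnode) by two $A_{1}$-points (two nodes) while preserving all other singular points; since $\tau_{p}=\mu_{p}$ for ${\rm ADE}$ singularities, this gives
$$\tau(C')=\tau(C)-\tau_{A_{3}}+2\tau_{A_{1}}=\tau(C)-3+2=\tau(C)-1 .$$
Recall that for a reduced plane curve with only quasi-homogeneous singularities one has $\tau\le\eta$ (the du Plessis--Wall inequality \cite{duP}, of which \eqref{duPles} is the equality case), with equality precisely for free curves, and that $\tau=\eta-1$ holds precisely for nearly free curves (Dimca's criterion \cite{Dimca1}). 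Since $\eta(C)=\eta(C')$ by hypothesis, we obtain the equivalences
$$C' \text{ nearly free} \iff \tau(C')=\eta(C')-1 \iff \tau(C)=\eta(C) \iff C \text{ free},$$
so it remains to prove that $C$ is free.

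To prove $C$ is free I would use the deformation genuinely. Set $r={\rm mdr}(f)$ and $r'={\rm mdr}(f')$, and realise the hypothesis by a flat family with special fibre $C$ and general fibre $C'$. The Hilbert function of the graded module of Jacobian relations $AR(f_{t})=\{(a,b,c)\in S^{3}:a\,\partial_{x}f_{t}+b\,\partial_{y}f_{t}+c\,\partial_{z}f_{t}=0\}$ is upper semicontinuous in $t$, so $\dim AR(f')_{k}\le\dim AR(f)_{k}$ for every $k$, with $\dim AR(f')_{k}=\dim AR(f)_{k}-1$ for $k\gg0$ (this encodes $\tau(C')=\tau(C)-1$); in particular $r'\ge r$. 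Because $\eta(C)=(d-1)^{2}-r(d-1-r)$ and $t\mapsto t(d-1-t)$ is strictly increasing on $[0,(d-1)/2]$ and symmetric about $(d-1)/2$, the equality $\eta(C)=\eta(C')$ combined with $r'\ge r$ leaves only $r'=r$ or $r'=d-1-r$, the latter forcing $r<(d-1)/2$ and $r'>(d-1)/2$.

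In the case $r'=d-1-r$ the sharper du Plessis--Wall bound yields $\tau(C')\le\eta(C')-\binom{2r'-d+2}{2}=\eta(C)-\binom{d-2r}{2}$, which, together with the du Plessis--Wall lower bound $\tau(C)\ge(d-1)(d-1-r)=\eta(C)-r^{2}$ and $\tau(C')=\tau(C)-1$, I would use to reach a contradiction with the remaining constraints (including the presence of the tacnode, which keeps $d$ from being too small); excluding this case through the full du Plessis--Wall estimates is one delicate point. In the surviving case $r'=r$, the deformation fixes both $d$ and ${\rm mdr}$ while lowering $\tau$ by exactly one, and I would conclude by a Hilbert-function comparison: $AR(f)$ and $AR(f')$ are rank-two reflexive graded $S$-modules with $\dim AR(f')_{k}\le\dim AR(f)_{k}$ everywhere and with the difference equal to $1$ in large degree, and such a dominating module $AR(f)$ must already split as a direct sum of two shifted copies of $S$ --- that is, $C$ is free. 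Extracting freeness of the special fibre from this rank-one drop in $\tau$ is, I expect, the main obstacle; once it is settled, the equivalences of the first paragraph close the proof.
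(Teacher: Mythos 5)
Your first paragraph is precisely the paper's argument: since all singularities are ${\rm ADE}$, hence quasi-homogeneous, replacing one $A_{3}$ point by two $A_{1}$ points gives $\tau(C')=\tau(C)-1$, and together with $\eta(C)=\eta(C')$ Dimca's criterion reduces the claim to $\tau(C)=\eta(C)$, i.e.\ to the freeness of $C$. The difference is that the paper does not prove this last fact: its proof opens with ``Assume that $C$ is free'', so freeness of $C$ is an (implicitly stated) hypothesis of the theorem, as the section title ``Constructing nearly free arrangements from free ones'' and the application to Persson's free sextic in Example~\ref{pers} make clear. The intended proof is therefore exactly your first paragraph and nothing more.

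The second half of your proposal, which tries to deduce freeness of $C$ from the deformation together with $\eta(C)=\eta(C')$, is where the genuine gap lies, and it cannot be repaired. The two decisive steps are only announced: the exclusion of the case $r'=d-1-r$ is left as a hoped-for contradiction, and the final claim that a rank-two module $AR(f)$ whose Hilbert function dominates that of $AR(f')$ with difference $1$ in large degrees ``must already split'' is unjustified and false in general. More importantly, no argument can close this gap, because freeness of $C$ does not follow from the stated hypotheses: take $C$ to be two smooth conics tangent at a single point and transversal elsewhere (one tacnode and two nodes, $\tau(C)=5$) and $C'$ a nearby pair of conics in general position (four nodes, $\tau(C')=4$). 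The du Plessis--Wall bounds force ${\rm mdr}=2$ for both curves, so $\eta(C)=\eta(C')=7$ and every hypothesis in the statement is satisfied, yet $C$ is not free and $C'$ is not nearly free, since $\tau(C')=4\neq 6$. So the correct reading is that ``$C$ is free'' must be added to the hypotheses; once it is, your first paragraph already finishes the proof and the semicontinuity machinery is unnecessary.
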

\begin{proof}
Assume that $C$ is free, then the following equation is satisfied:
$$\eta(C) = d_{1}^{2}-d_{1}(d-1)+(d-1)^{2} = \tau(C).$$
Now we look at $\tau(C')$. By the above assumptions observe that
$$\tau(C') = (n_{2}+2) + 3(t_{3}-1) + \ldots = -1 + \tau(C),$$
so we arrive at the following equation
$$\tau(C) = \tau(C')+1.$$
Since $\eta(C) = \eta(C')$, we have
$$\eta(C') = \tau(C')+1,$$
which means that $C'$ is nearly free.
\end{proof}
Now we are going to use the above result in practice.
\begin{example}
\label{pers}
Consider $\mathcal{C} = \{C_{1}, C_{2}, C_{3}\} \subset \mathbb{P}^{2}_{\mathbb{C}}$ with the defining equation
$$F(x,y,z) = (x^{2}+y^{2}-z^{2})(2x^2 + y^2 +2xz)(2x^2 + y^2 - 2xz) =0.$$
This arrangement is the aforementioned Persson's triconical arrangement \cite[2.1 Proposition]{Persson}. We have the following list of singular points:
$$n_{2} = 2, \quad t_{3}=1, \quad t_{7}=2,$$
and the arrangement $\mathcal{C}$ is free since this is an example of a maximizing sextic in the plane \cite{max}.

Consider now the following deformation $\mathcal{C}^{'}$ of $\mathcal{C}$ given by the polynomial
$$G(x,y,z) = (2x^{2}+2y^{2}+3xz+z^{2})(2x^{2}+2y^{2}-3xz+z^{2})(x^{2}+4y^{2}-z^{2}).$$
It can be easily checked that this deformation satisfies the assumptions of Theorem \ref{deform} -- we have
$n_{2}=4, \,t_{7}=2$, $d_{1}={\rm mdr}(G)=3$, and $\eta(F) = \eta(G)$, which leads us to the fact that the arrangement $\mathcal{C}'$ is nearly free.
\end{example}
\section{On the combinatorial supersolvability of conic arrangements}
Our main aim here is test certain combinatorial conditions in order to construct new free conic arrangements with arbitrary singularities. Our decision to allow different types of singularities is based on our current experience that there are only a few known free conic arrangements with ${\rm ADE}$ singularities. In order to approach this problem we introduce now a certain combinatorial property that might potentially help with constructing new examples of free arrangements.
\begin{definition}
Let $C \subset \mathbb{P}^{2}_{\mathbb{C}}$ be a reduced plane curve which has at least two irreducible components. We say that $C$ is \textbf{combinatorially supersolvable} if there exists a singular point $p \in {\rm Sing}(C)$ such that for any singular point $q \in {\rm Sing}(C)$ there exists an irreducible component $C'$ of $C$ such that $p,q \in C'$. 
\end{definition}
 In the setting of the above definition, the singular point $p$ is called \textbf{combinatorially modular} and, in general, such a point is not uniquely determined. The above definition is a natural geometric generalization of the notion of being supersolvable in the setting of line arrangements. Let us recall here that if $\mathcal{L}\subset \mathbb{P}^{2}_{\mathbb{C}}$ is an arrangement of lines that is supersolvable, then $\mathcal{L}$ is free, and this result follows from \cite{Jambu}. It is natural to wonder whether being combinatorially supersolvable in the class of conic arrangements (or just reduced plane curves) is related with the freeness. Let us present some contrasting examples.
 \begin{example}[P\l oski's moustache curve]
    Here we consider a curve $\mathcal{P}_{2m}$ of degree $2m$ which consists of exactly $m$ smooth conics. We can take the following homogeneous equation for all necessary considerations:
$$Q(x,y,z) = \prod_{i=1}^{m}(xz+i\cdot x^2 + y^{2}).$$
By a result due to P\l oski \cite{Ploski} we know that the above arrangement of curves has the only one singular point with the local Milnor number equal to $(2m-1)^{2}-m$. On the other hand, its local Tjurina number is equal to $(2m-1)^{2}-(2m-2)$, so this singularity is not quasi-homogeneous. A deeper inspection tells us that the considered singularity is a merger of $A_{7}$ singularities produced by each pair of distinct conics. One can check (almost by hand) that $d_{1} = {\rm mdr}(Q)=1$, and hence
$$d_{1}^{2} - d_{1}(d-1) + (d-1)^{2} - \tau(\mathcal{P}_{2m}) = 1 - (2m-1) + (2m-1)^{2} - ((2m-1)^{2}-2m+2)=0,$$
so for every $m\geq 2$ the curve is free. Obviously $\mathcal{P}_{2m}$ is combinatorially supersolvable.
 \end{example}
 \begin{example}[A pencil of conics passing through $4$ general points]
 Consider the following two conics given by
 $$F(x,y,z) = 3x^2+y^2-4z^2, \quad G(x,y,z) = x^2 + 3y^2-4z^2.$$
 It is easy to see that these two conics intersect at $4$ distinct general points. Now we take the following pencil $\mathcal{PC}$ given by the following polynomial:
 $$H(x,y,z) = f g \cdot \prod_{i=2}^{m-2}(f+i\cdot g).$$
 It is well-known that all four $k$-fold intersection points are quasi-homogeneous -- see for instance \cite[Proposition 4.3]{DJP}. Now we are going to check whether $\mathcal{PC}$ is free. Observe that ${\rm mdr}(H)=2$ since we have the following relation (that can be easily check via \verb}Singular} \cite{Singular}):
 $$yz \cdot \partial_{x} H + xz \cdot \partial_{y} H + xy\cdot \partial_{z} H=0.$$
Now we can compute the so-called \textbf{defect}. Let us recall that the defect of a given reduced curve $C \subset \mathbb{P}^{2}_{\mathbb{C}}$ of degree $d$ is defined as
$$\nu(C) = d_{1}^{2}-d_{1}(d-1) + (d-1)^{2} - \tau(C).$$
Having this formula in hand, we can compute that
 $$\nu(\mathcal{PC}) = 4 - 2(2m-1) + (2m-1)^{2} - 4\cdot(m-1)^{2} = 3,$$
 so $\mathcal{PC}$ is neither free nor nearly-free. Obviously $\mathcal{PC}$ is combinatorially supersolvable.
 \end{example}
\begin{example}[Pencil of conics with $2$ base points]
This construction was presented \cite[Section 3]{Malara} and was inspired by \cite{DimcaConic}. Let us consider the following curve $\mathcal{C}_{k}$ of degree $2k$ consisting of exactly $k$ conics which is given by the following homogeneous equation:
$$F(x,y,z) = x^{k}y^{k}+z^{2k}$$
with $k\geq 2$. It is easy to observe that $\mathcal{C}_{m}$ has only two singular points located at $P_{1}=(1:0:0)$ and $P_{2}=(0:1:0)$, so we have a pencil of conics with two base points. Now we compute the defect $\nu(\mathcal{C}_{k})$ in order to show that our arrangement of conics is nearly free. Observe that $d_{1}={\rm mdr}(F)=1$ and it follows from the obvious check that
$$x\cdot \partial_{x}F - y\cdot \partial_{y}F = 0.$$ Moreover, the singular points are quasi-homogeneous and their local Tjurina numbers are equal to $(2k-1)(k-1)$. Finally we are in a position to compute the defect, namely
$$\nu(\mathcal{C}_{k}) = 1 - (2k-1)+(2k-1)^{2} -2\cdot(2k-1)(k-1) = 1,$$
and this justifies our claim that the arrangements is nearly free. Obviously $\mathcal{C}_{k}$ is combinatorially supersolvable for each $k \geq 2$.
\end{example}
 These families of examples show that a naive combinatorial generalization of the supersolvability, taken almost literally from the word of line arrangements in the plane, is not sufficient to produce a new closed class of free conic arrangements. However, in certain situations we get new interesting examples of free or nearly free plane curves which is a kind of a consolation prize.

Finishing this section, let us recall the following crucial definition which apparently might have higher potential for further research - this notion was introduced recently in \cite{IDPS}.
\begin{definition}
Let $C \subset \mathbb{P}^{2}_{\mathbb{C}}$ be a reduced curve. We say that $p 
\in C$ is a modular point for $C$ if the central projection
$$\pi_{p} : \mathbb{P}^{2}_{\mathbb{C}} \setminus \{p\} \rightarrow \mathbb{P}^{1}_{\mathbb{C}}$$
induces a locally trivial fibration of the complement $M(C) = \mathbb{P}^{2}_{\mathbb{C}} \setminus C$. We say that a given curve $C$ is \textbf{supersolvable} if it has at least one modular point.
\end{definition}
 If we restrict to the class of line arrangements, the above definition of a modular point coincides with the usual one. In this setting, the following remains open.
 \begin{question}
 Is a supersolvable plane curve $C$ always free?
\end{question}
We do not dare to decide whether this conjecture is true or false here, but it would be very natural to compare the supersolvability and the combinatorial supersolvability using different classes of curves, i.e., not only conics in the plane.

\section{Addendum to \cite{Pokora}}
Here we would like point out an observation regarding non-freeness of the so-called $d$-arrangements of plane curves in $\mathbb{P}^{2}_{\mathbb{C}}$.
\begin{definition}
Let $\mathcal{D}=\{C_{1}, ..., C_{k}\} \subset \mathbb{P}^{2}_{\mathbb{C}}$ be an arrangement of irreducible curves. We say that $\mathcal{D}$ is a $d$-arrangement if the following conditions hold:
\begin{enumerate}
    \item all irreducible components of $\mathcal{D}$ are smooth of degree $d\geq 1$,
    \item all intersection points are ordinary singularities.  
\end{enumerate}
\end{definition}
In the class of $d$-arrangements, the quasi-homogeneity property for singularities is, generally speaking, governed by their multiplicity, namely ordinary singularities are quasi-homogeneous if their multiplicity is less than $5$ -- see \cite[Exercise 7.31]{RCS}.
Based on that observation we can formulate the following result that generalizes \cite[Theorem A]{Pokora}.
\begin{theorem}
Let $\mathcal{D}\subset \mathbb{P}^{2}_{\mathbb{C}}$ be a $d$-arrangement with $d\geq 3$ and $k\geq 3$ having ordinary singularities with multiplicities less than $5$. Then $\mathcal{D}$ is never free.
\end{theorem}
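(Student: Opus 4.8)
The strategy is the one from the proof of Theorem~\ref{near}: assume $\mathcal{D} : f = 0$ is free, combine the numerical characterization of freeness from \eqref{duPles} with the combinatorial (line-count) identity for a $d$-arrangement, and derive a contradiction by forcing some $n_i$ to be negative. First I would set up notation: write $d_1 = {\rm mdr}(f)$, note that $\mathcal{D}$ has degree $kd$, and let $n_r$ denote the number of ordinary $r$-fold points of $\mathcal{D}$ for $2 \leq r \leq 4$ (no points of multiplicity $\geq 5$ by hypothesis). Since an ordinary $r$-fold point with $r < 5$ is quasi-homogeneous, $\tau_p = \mu_p = (r-1)^2$, so $\tau(\mathcal{D}) = \sum_{r=2}^{4} (r-1)^2 n_r = n_2 + 4n_3 + 9n_4$. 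The Bézout/combinatorial count for the $k$ smooth degree-$d$ components intersecting only at these ordinary points gives $\binom{k}{2} d^2 = \sum_{r=2}^4 \binom{r}{2} n_r = n_2 + 3n_3 + 6n_4$.

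Next I would invoke freeness. Either I bound $d_1$ directly via the Arnold-exponent inequality (Theorem~\ref{sern}): the worst ADE-type ordinary singularity here is the ordinary quadruple point, but in fact for ordinary $r$-fold points the log canonical threshold is $2/r$, so $\alpha_{\mathcal{D}} = 2/4 = 1/2$ if $n_4 > 0$ and $\alpha_{\mathcal{D}} = 2/3$ if $n_4 = 0$ but $n_3 > 0$; this already pins $d_1 \geq \tfrac{1}{2} kd - 2$, while freeness with ${\rm mdr}(f) \leq (kd-1)/2$ requires $d_1 \leq (kd-1)/2$. Substituting $d_1$ into \eqref{duPles}, the freeness equation reads $d_1^2 - d_1(kd-1) + (kd-1)^2 = n_2 + 4n_3 + 9n_4$. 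I would then eliminate $n_2$ using the combinatorial count $n_2 = \binom{k}{2}d^2 - 3n_3 - 6n_4$, obtaining a quadratic in $d_1$ of the form $d_1^2 - d_1(kd-1) + \big[(kd-1)^2 - \binom{k}{2}d^2\big] - n_3 - 3n_4 = 0$. For this to have a real root $d_1$ the discriminant must be nonnegative, which forces $n_3 + 3n_4$ to be at least roughly $\tfrac{3}{8}(kd)^2$ (precisely $\geq \big[(kd-1)^2 - \binom{k}{2}d^2\big] - \tfrac14(kd-1)^2$). Feeding this lower bound back into $\binom{k}{2}d^2 = n_2 + 3n_3 + 6n_4 \geq n_2 + n_3 + 3n_4 + 2(n_3 + 3n_4)$ should collapse to an inequality like $n_2 + n_3 + 3n_4 \leq c$ for a small constant $c$ that is incompatible with the curve actually being singular and having $k \geq 3$ components.

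The main obstacle I anticipate is bookkeeping the arithmetic so that the final constant genuinely comes out too small — the coefficients in the $k=2$ conic case of Theorem~\ref{near} were delicate (one gets $n_2 + n_3 \leq \tfrac12$), and for general $d \geq 3$ and $k \geq 3$ one has to be careful that the cross-terms in $\binom{k}{2}d^2$ versus $(kd-1)^2$ land on the right side. A secondary subtlety is that freeness via \eqref{duPles} is stated only under the hypothesis ${\rm mdr}(f) \leq (d-1)/2$; I would need the Dimca--Sernesi bound (Theorem~\ref{sern}) together with the freeness constraint $d_1 + d_2 = kd - 1$ and $d_1 \leq d_2$ to guarantee we are in that regime, or else handle the $d_1 > (kd-1)/2$ case separately — but there $d_1 = d_2 = (kd-1)/2$ is impossible for parity reasons unless $kd$ is odd, and even then it forces $\tau(\mathcal{D}) = \tfrac34(kd-1)^2$, which one checks again violates the combinatorial count. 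Once the discriminant inequality is in place, the contradiction should drop out exactly as in Theorem~\ref{near}.
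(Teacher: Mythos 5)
Your plan is essentially the paper's own argument: the paper gives no details at all, saying only that the proof is verbatim that of \cite[Theorem A]{Pokora} after replacing the total Tjurina number and the intersection count by the ones involving $n_4$, and your reconstruction via the discriminant trick from Theorem~\ref{near} is exactly that computation, with $\tau(\mathcal{D})=n_2+4n_3+9n_4$ (quasi-homogeneity of ordinary points of multiplicity $<5$) and the B\'ezout count $\binom{k}{2}d^2=n_2+3n_3+6n_4$ (your version with $d^2$ is the correct one; the paper's displayed formula has $d$ where $d^2$ should stand). One simplification: the Arnold-exponent detour and the worry about the hypothesis ${\rm mdr}(f)\le (kd-1)/2$ in \eqref{duPles} are unnecessary, since freeness itself gives $d_1\le d_2$ and $d_1+d_2=kd-1$, hence $d_1\le(kd-1)/2$ automatically; so the equation $d_1^2-d_1(kd-1)+(kd-1)^2=\tau(\mathcal{D})$ holds outright and there is no second case to handle.

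There is, however, one step in your sketch that fails as written: the inequality $n_2+3n_3+6n_4\ge n_2+n_3+3n_4+2(n_3+3n_4)$ amounts to $3n_3+6n_4\ge 3n_3+9n_4$ and is false whenever $n_4>0$. The correct bookkeeping is the identity $n_2+3n_3+6n_4=n_2+n_3+2(n_3+3n_4)$, and with it the argument closes cleanly: eliminating $n_2$ gives $d_1^2-d_1(kd-1)+(kd-1)^2-\binom{k}{2}d^2-n_3-3n_4=0$, the discriminant condition yields $4(n_3+3n_4)\ge 3(kd-1)^2-2k(k-1)d^2$, and substituting back into the identity gives
\[
n_2+n_3\;\le\;\binom{k}{2}d^2-\tfrac12\bigl(3(kd-1)^2-2k(k-1)d^2\bigr)\;=\;\tfrac32\bigl(kd(2-d)-1\bigr),
\]
which is negative for every $d\ge 2$, in particular for $d\ge 3$, contradicting $n_2,n_3\ge 0$. (For $d=1$ the right-hand side is $\tfrac32(k-1)\ge 0$, which is why free line arrangements escape, as they must.) So the plan is the right one and matches the paper's intended proof; only this arithmetic split needs to be repaired.
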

\begin{proof}
Since the proof is verbatim as in \cite[Theorem A]{Pokora}, with suitable small changes with respect to the total Tjurina number of a given arrangement $\mathcal{D}$ and the combinatorial count
$$d\cdot \binom{k}{2} = n_{2} + 3n_{3} + 6 n_{4},$$
where $n_{i}$ denotes the number of (ordinary) points of multiplicity $i$, we leave the details to the reader.
\end{proof}
\section*{Conflict of Interests}
I declare that there is no conflict of interest regarding the publication of this paper.
\section*{Acknowledgments}
I would like to thank Alex Dimca for many interesting conversations around the content of the paper and remarks. In addition, I am very grateful to an anonymous referee for all the valuable comments that have allowed me to improve the paper.

Piotr Pokora is partially supported by The Excellent Small Working Groups Programme \textbf{DNWZ.711/IDUB/ESWG/2023/01/00002} at the University of the National Education Commission Krakow.

\vskip 0.5 cm

\bigskip
Piotr Pokora,
Department of Mathematics,
University of the National Education Commission Krakow,
Podchor\c a\.zych 2,
PL-30-084 Krak\'ow, Poland. \\
\nopagebreak
\textit{E-mail address:} \texttt{piotr.pokora@up.krakow.pl}

\end{document}